\let\NAT@parse\undefined
\newif\ifnotsergio
\newif\ifsergio
\newcounter{propcount}
\newcounter{remcount}
\newcounter{excount}
\newtheorem{remm}[remcount]{Remark}
\newtheorem{proposition}[propcount]{Proposition}
\newtheorem{ex}[excount]{Example}
\newenvironment{proof}{{\em Proof. }}{\hfill \hspace*{1pt} \hfill $\blacksquare$}
\begin{document}
\author{R. Drummond and S. R. Duncan
\thanks{ R. Drummond and S. R. Duncan are with the Department of Engineering Science, University of Oxford, 17 Parks Road, OX1 3PJ, Oxford, United Kingdom, Email: \{ross.drummond,stephen.duncan\}@ eng.ox.ac.uk. 
}
}

\IEEEoverridecommandlockouts

\title{Accelerated Gradient Methods with Memory}

\maketitle
\thispagestyle{empty}
\pagestyle{empty}

\begin{abstract}
A set of accelerated first order algorithms with memory are proposed for minimising strongly convex functions. The algorithms are differentiated by their use of the iterate history for the gradient step. The increased convergence rate of the proposed algorithms comes at the cost of robustness, a problem that is resolved by a switching controller based upon adaptive restarting. Several numerical examples highlight the benefits of the proposed approach over the fast gradient method. For example, it is shown that these gradient based methods can minimise the Rosenbrock banana function to $7.58 \times 10^{-12}$ in 43 iterations from an initial condition of $(-1,1)$. 

\end{abstract}

\begin{IEEEkeywords}
Optimisation algorithms, fast gradient method, absolute stability.
\end{IEEEkeywords}

\section*{Introduction}

This paper considers the minimisation of continuously differentiable functions $f(x): \mathbb{R}^n \to \mathbb{R}$ belonging to the class $\mathcal{S}_{\mu,L}^n$ of strongly convex and Lipschitz bounded functions, satisfying
\begin{subequations}\begin{align}
\langle f'(x)-f'(y),x-y \rangle \geq \mu \|x-y\|^2, \\
\|f'(x)-f'(y)\| \leq L\|x-y\|,
\end{align}\end{subequations}
with $0 <\mu < L$. For this purpose, black-box gradient methods where the algorithms are only given the parameters $\mu$ and $L$ are developed. The main results of the paper are a set of gradient-based algorithms ($\Sigma_N$ in \eqref{Sigma_N}) that use the iterate memory to accelerate the convergence rate towards the minimiser.  The algorithms are parametrised for quadratic functions such that the error of the mode corresponding to the dominant eigenvalue of the Hessian $w^{(i_{\mu})}_{k}$ converges to zero at the rate
\begin{align}\label{w_rate}
\|w^{(i_{\mu})}_{k}\|_2 = \left(1-\left(\frac{\mu}{L}\right)^{\frac{1}{N}}\right)^k\|w^{(i_{\mu})}_{0}\|_2
\end{align}
for generic $N \in \mathbb{N}$ as described in Section \ref{sec:theta}. This rate is faster than that of the fast gradient method for this mode when $N \geq 3$.


The paper is structured as follows. Section \ref{sec:clas_algs} briefly describes the classical first-order methods of gradient descent and the fast gradient method. The proposed set of algorithms are introduced in Section \ref{sec:algs}, where their parameterisation and robustness for quadratic problems is also discussed. Section \ref{sec:rob} generalises the analysis of these methods to strongly convex problems using absolute stability theory. A switching based control scheme, based upon adaptive restarting, is introduced in Section  \ref{sec:res} to robustify these algorithms and recover their acceleration. The performance of these robustified algorithms is examined in Section \ref{sec:ex_quad} via numerical examples.

\section*{Notation}
The index notation $x^{(i)}_{k} $ indicates the $i^{th}$ element of the vector $x^{(i)}_{k}$ evaluated at time step $k$. Bracketed upper indices are used to relate to elements. $\mathcal{Z}$-transforms of signals will be denoted in capitals when clear from the context.

The fast gradient method is referred to as $\mathcal{FG}$ and gradient descent as $\mathcal{GD}$. In the latter stages of the paper, these algorithms will also be referred to as $\Sigma_1 = \mathcal{GD}$ and $\Sigma_2 = \mathcal{FG}$ so as to be consistent with the notation of the proposed algorithm set  $\Sigma_N$. The reciprocal of the condition number of a function is denoted  $\kappa  = \mu/L$. The identity matrix of dimension $n$ is denoted $I_n$.

\section{First Order Methods: Gradient Descent and the Fast Gradient Method}\label{sec:clas_algs}
The use of first-order methods for minimising strongly convex functions $f(x) \in \mathcal{S}_{\mu,L}^n$ has recently seen revived interest due to their relative computational simplicity, making them ideal for problems with large numbers of decision variables. A typical first order method is initialised with an estimate $x_0 \in \mathbb{R}^n$ of the unique minimiser  of the function $x^* \in \mathbb{R}^n$ and a set of parameters $\theta \in \mathbb{R}^N$ determined from the strong convexity and Lipschitz constants $\mu$ and $L$. The estimate $x_0$ is then updated iteratively by stepping in the direction of the gradient of the function, generating a sequence $x_k \in \mathbb{R}^n$.

The classical first-order method is the gradient descent ($\mathcal{GD}$) algorithm 
\begin{align}\label{gd}
x_{k+1} = x_k-\frac{1}{L} \nabla f(x_k)
\end{align}
which is posed here with a step-length of $1/L$. For functions in $\mathcal{S}_{\mu,L}^n$, this algorithm is guaranteed to converge monotonically towards the optimal value $x^*$ but only at a rate $\propto 1-\mu/L $, which is too slow for many applications. 

A major development in first-order methods was Nesterov's fast gradient method $(\mathcal{FG})$ (or accelerated gradient method) of \cite[Chapter 2]{nest}. Three versions of this algorithm were proposed in \cite[Chapter 2]{nest} and in this paper, the simplest of these will be considered as it recovers the same convergence rate for functions in $\mathcal{S}_{\mu,L}^n$. This method is described by the iterate sequence
\begin{subequations}\label{fg}\begin{align}
x_{k+1} &= y_k-\frac{1}{L}\nabla f(y_k) \\
y_k & = (1+\beta) x_k-\beta x_{k-1}
\end{align}
\end{subequations}
where a step length of $1/L$ has again been used and with the tuning parameter $\beta$ being determined from the reciprocal of the condition number $\kappa = \mu/L$ according to
\begin{align}
\beta = \frac{1-\sqrt{\kappa}}{1+\sqrt{\kappa}}.
\end{align}
At each iteration $k$, the $\mathcal{FG}$ algorithm uses the stored iterate values $x_k$ and $x_{k-1}$ to generate a new point $y_k \in \mathbb{R}^n$ to take the gradient step from. The striking feature of this algorithm is that this rather simple augmentation of gradient descent can result in a dramatic speed up in the convergence rate to $\propto 1-(\mu/L)^{1/2}$ such that
\begin{align}\label{fg_rate}
\|x_k-x^*\|_2^2 \leq (1-(\mu/L)^{1/2})\|x_0-x^*\|^2_2.
\end{align}
Crucially, since the algorithm only uses addition and multiplication, this speed up is achieved without significant sacrifices on computational efficiency. 


Even though the $\mathcal{FG}$ algorithm was developed more than twenty years ago, the impact of the relative simple change of stepping from the point $y_k$ instead of $x_k$ is still being interpreted from a dynamical systems perspective. Using previous iterate information $x_{k-1}$ is said to introduce ``momentum'' or ``inertia'' into the algorithm \cite{momentum}, a term which has been interpreted in terms of both continuous \cite{boyd} and discrete \cite{candes} time second order dynamical systems. It was from this perspective that the symplectic integrator schemes  of \cite{symplectic, jordan1} were developed.

Another interpretation is obtained by considering the algorithms as the feedback interconnection of a linear system with a nonlinear function $u(y_k) = \nabla f(y[k])$ that is both static and sector bounded. Such an approach allows the tools of absolute stability theory \cite[Chapter 6]{khalil} to then be applied. This connection between algorithm design and absolute stability was pioneered in \cite{iqcs} which used the language of integral quadratic constraints (IQCs) to obtain convergence rate bounds that only require solving low-dimensional semi-definite programs (SDP). Lyapunov functions for $\mathcal{FG}$ were proposed in \cite{jordan2, lessard1,lessard3}, with a similar structure to the classical Tsypkin functions of \cite{tsypkin} and \cite{haddad, szego,drum_val}. Exploiting this feedback interpretation, \cite{vanscoy} proposed a parametrisation of $\mathcal{FG}$ with the fastest known convergence rate, with this parametrisation obtained from considering the closed-loop behaviour. Other developments of the fast gradient method include using secant information \cite{heath2}, sum-of-squares programming to determine the algorithm coefficients \cite{sos} and by drawing connections with multi-step methods from numerical integration \cite{intmethods}.

The question still remains as to whether it is possible to radically speed up $\mathcal{FG}$ still further using a similar simple augmentation of the gradient descent algorithm. This is the main purpose of this paper. To this end, a set of algorithms are proposed that use $N \in \mathbb{N}$ historical iterate values to generate the point $y_k$. These algorithms are parameterised by considering quadratic problems such that the iterate corresponding to the dominant eigenvalue of the Hessian converges at the rate $ \propto 1-(\mu/L)^{1/N}$. Note that for $N \geq 3$ this rate is faster than the bound of the fast gradient method, although these algorithms are not robust in the sense discussed in Section \ref{sec:rob1}. To recover performance, an adaptive restarting scheme is proposed and numerical examples showcase the benefits of the approach.



\section{Main Results: Proposed Algorithm set}\label{sec:algs}

For a given $N \in \mathbb{N}$, the minimisation of functions $f(x) \in \mathcal{S}_{\mu,L}^n$ is considered using the $\Sigma_N$ algorithms
\begin{subequations} \label{Sigma_N}
\begin{align} 
x_{k+1} &= y_k-\frac{1}{L}\nabla f(y_k), \label{dyns}
\\
y_k& = \sum_{j = 0}^{N-1} \theta^{(j)} x_{k-j},
\end{align}
whose parameters $\theta  = [\theta_0, \dots, \theta_{N-1}] \in \mathbb{R}^N$ satisfy the affine constraint
\begin{align}
\theta \in \Theta  := \Bigg\lbrace\theta \in \mathbb{R}^N :\sum_{j = 0}^{N-1} \theta^{(j)} & = 1\Bigg\rbrace. \label{constraint}
\end{align}
\end{subequations}
 The constraint \eqref{constraint} on the sum of the parameters allows the error $e_k = x_k - x^*$ to be expressed recursively. The parameter $N$ defines the ``memory'' of the algorithm, with $N = 1$ returning gradient descent $\mathcal{GD}$ and $N = 2$ giving the structure of the fast gradient method $\mathcal{FG}$. This section is concerned with algorithms where $N \geq 3$. The notation $\Sigma_N$ denotes an algorithm with memory $N$, with $\Sigma_1 = \mathcal{GD}$ and $\Sigma_2 = \mathcal{FG}$. The results of this section build upon the ideas presented in \cite{drum} for minimising the quadratic cost functions of model predictive control.

The algorithms $\Sigma_N$ \eqref{Sigma_N} have a similar structure to $\mathcal{FG}$, as they perform a gradient step from a point $y_k$ generated from $N$  past iterates. The difference between them comes from the increased number of past iterates used in $\Sigma_N$. It is not required that the algorithm parameters $\theta \in \Theta$ be positive, so the generated point $y_k$ may not lie within the convex hull of the previous iterates. This fact is used in Section \ref{sec:non_convex} for the minimisation of non-convex functions.

\subsection{Setting the parameters $\theta$}\label{sec:theta}
The  $\Sigma_N$ algorithm parameters $\theta$ still need to be defined. These parameters are set by considering the minimisation of quadratic functions $f_Q(x):\mathbb{R}^n \to \mathbb{R}$
\begin{align}
f_Q(x) = \frac{1}{2}x^THx + h^Tx
\end{align}
belonging to the class $f_Q \in \mathcal{Q}^n_{\mu,L} \subset \mathcal{S}_{\mu,L}^n$ where the eigenvalues $\lambda_i$  of the Hessian $H$ are restricted to be positive and lie within the range $\lambda_i \in [\mu,L]~\forall i =  1, \dots, n$.

Considering the iterate update of $\Sigma_N$ in \eqref{dyns} with $f(\cdot) = f_Q(\cdot)$, then by subtracting the minimiser $x^* \in \mathbb{R}^n$ from both sides and adding $\frac{1}{L}\nabla f(x^*) = 0$ to the RHS generates the recursive error sequence
\begin{align}
e_{k+1} &= \sum_{j = 0}^{N-1} \theta^{(j)} \left(1-\frac{1}{L}H\right)e_{k-j}. \label{error_sys}
\end{align} 
 Introducing the function $m(s): [\kappa,1] \to [0, 1-\kappa]$,
\begin{align}
m(s) = 1-s,
\end{align}
then this error system \eqref{error_sys} admits an eigen-decomposition
\begin{align} \label{modes_i}
w^{(i)}_{k+1} &= \sum_{j = 0}^{N-1} \theta^{(j)} m(\lambda_i/L)w^{(i)}_{k-j}, \quad i = 1, \dots, n,
\end{align} 
with $\lambda_i$ being the $i^{th}$ eigenvalue of the Hessian $H$. 

The parameters $\theta$ are set according to the mode associated to the dominant eigenvalue of the Hessian, namely $ \lambda_{i_{\mu}} = \mu$. This mode evolves according to 
\begin{align}\label{w_i_mu}
w^{(i_{\mu})}_{k+1} &= \sum_{j = 0}^{N-1} \theta^{(j)} m(\kappa)w^{(i_{\mu})}_{k-j},
\end{align} 
and has characteristic equation
\begin{align}\label{char}
p_N(r;m(\kappa)) = r^N - \sum_{j = 0}^{N-1} \theta^{(j)} m(\kappa)r^{N-1-j}= 0.
\end{align} 
Denoting the roots of this polynomial as $r_1, \, r_2, \,\dots, \,  r_N$, then the evolution of the state of the above linear system can be written
\begin{align}
w^{i_{\mu}}_{k+1} & \propto c_1r_1^k + c_2r_2^k + \dots + c_Nr_N^k,
\end{align} 
with the roots $r_j$ determining the rate of convergence. The problem is then to choose the parameters $\theta \in \mathbb{R}^N$ (subject to the constraint \eqref{constraint}) that minimises the moduli of the roots $r_j$ for  $j = 1, \dots N$. 

Defining the root radius of a polynomial $p_N(r)$ 
\begin{align}
\rho(p_N) = \max \{|r| \, : \, p_N(r;m(\kappa)) = 0, r \in \mathbb{C}\},
\end{align}
with the polynomials $p_N(r)$ contained within the set
\begin{align}
P = \Bigg\lbrace r^N - \sum_{j = 0}^{N-1} \theta^{(j)} m(\kappa)r^{N-1-j} : \sum_{j = 0}^{N-1} \theta^{(j)} = 1, \theta^{(j)} \in \mathbb{R}\Bigg\rbrace,
\end{align}
then this problem can be cast as that of finding $\theta \in \Theta$ that transforms  $p_N$ into the optimal polynomial $p^*_N$ minimising
\begin{align} \label{root_opt}
\rho^* := \rho(p_N^*)=  \inf_{p_N \in P} \quad \rho(p_N).
\end{align}

In general, the problem of globally minimising the root radius of a polynomial is known to be both non-convex and not Lipschitz. However, in \cite{roots}, it was shown that there exists an analytic solution to this problem if the coefficients of the polynomials $p_N(r;m(\kappa))$ are subject to a single affine constraint, which is the class of polynomials considered here with the affine constraint given by \eqref{constraint}. 

Theorem 6 from \cite{roots} states that if the parameters $\theta$ satisfy the affine constraint \eqref{constraint} and moreover are allowed to be complex  $\theta \in \mathbb{C}^N$, then the solution $p_N^*(r)$ to the root radius optimisation \eqref{root_opt} is the polynomial with a common real root $\gamma \in \mathbb{R}$
\begin{align}
p_N^*(r) = (r-\gamma)^N.
\end{align}
Similarly, if the parameters satisfy the affine constraint \eqref{constraint} but are restricted to be real $\theta \in \mathbb{R}^N$, then Theorem 1 of \cite{roots} states that the optimal polynomial is instead
\begin{align}
p_N^*(r) = (r+\gamma)^{N-j}(r-\gamma)^{j},
\end{align}
for some integer $j$ with $0 \leq j \leq N$. 

These theorems form the basis for the parameter choice adopted here; choosing $\theta$ such that $p_N(r;m(\kappa))$ has a common root $\gamma$. Even though this choice may not be optimal according to Theorem 1 of \cite{roots}, it gives an analytic solution to the problem which is useful for determining the algorithm speed-up.
\begin{proposition}[Choosing $\theta$] \label{prop:roots}
If such a choice exists, then the parameters $\theta \in \mathbb{R}^N$ should be chosen such that the polynomial $p_N(r;m(\kappa))$ has a common root at $\gamma \in \mathbb{R}$. This common root is given by 
\begin{align} \label{root_N}
\gamma = 1-\left(\frac{\mu}{L}\right)^{\frac{1}{N}}.
\end{align}

\end{proposition}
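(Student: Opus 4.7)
The plan is to evaluate both representations of the polynomial $p_N(r;m(\kappa))$ at the convenient point $r=1$, which collapses the affine constraint \eqref{constraint} into a single scalar equation for $\gamma$.

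First I would take the hypothesis of the proposition literally: assume a common real root $\gamma$ exists, so that the monic degree-$N$ polynomial $p_N(r;m(\kappa))$ must equal $(r-\gamma)^N$. This gives the identity
\begin{equation*}
r^N - m(\kappa)\sum_{j=0}^{N-1}\theta^{(j)} r^{N-1-j} = (r-\gamma)^N,
\end{equation*}
valid for all $r\in\mathbb{R}$.

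Second, I would evaluate this identity at $r=1$. The left-hand side becomes $1 - m(\kappa)\sum_{j=0}^{N-1}\theta^{(j)}$, and the constraint $\theta\in\Theta$ in \eqref{constraint} forces the sum of the $\theta^{(j)}$ to equal one, so the left-hand side simplifies to $1-m(\kappa)=\kappa = \mu/L$. The right-hand side is $(1-\gamma)^N$. Equating gives $(1-\gamma)^N=\mu/L$, and selecting the real $N$-th root (the one with $1-\gamma>0$, so that $\gamma<1$, which is required for the mode $w^{(i_\mu)}_k$ in \eqref{w_i_mu} to be contractive) yields $\gamma=1-(\mu/L)^{1/N}$.

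Almost nothing is obstructive here; the only subtlety worth flagging is the branch choice of the $N$-th root, which is forced by the stability requirement $|\gamma|<1$ inherited from the convergence discussion surrounding \eqref{w_rate}. The proposition also asserts the qualifier \emph{if such a choice exists}; existence can be justified by noting that matching $(r-\gamma)^N$ coefficient by coefficient against $p_N(r;m(\kappa))$ yields $N$ linear equations in the $N$ unknowns $\theta^{(0)},\dots,\theta^{(N-1)}$, namely $m(\kappa)\theta^{(j)}=(-1)^{j}\binom{N}{j+1}\gamma^{j+1}$, which uniquely determines $\theta$ whenever $m(\kappa)\neq 0$ (i.e.\ $\mu<L$, which is assumed), and the affine constraint \eqref{constraint} is then automatically satisfied by the computation just performed at $r=1$.
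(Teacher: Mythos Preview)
Your argument is correct and reaches the same identity $(1-\gamma)^N=\kappa$ as the paper, but by a more direct route. The paper's proof invokes Vi\`ete's formulas explicitly: setting $r_1=\cdots=r_N=\gamma$ it writes out each relation $\binom{N}{j}\gamma^{j}=(-1)^{j+1}\theta_{N-j}\,m(\kappa)$, then sums all $N$ of them and uses $\sum_j\theta^{(j)}=1$ to obtain $\sum_{j=1}^{N}(-1)^{j+1}\binom{N}{j}\gamma^{j}=m(\kappa)$, which it then recognises as $-( \gamma-1)^N+1=1-\kappa$. You collapse this entire computation into a single evaluation of the polynomial identity $p_N(r;m(\kappa))=(r-\gamma)^N$ at $r=1$: the left-hand side becomes $1-m(\kappa)\sum_j\theta^{(j)}=\kappa$ by the affine constraint, and the right-hand side is $(1-\gamma)^N$. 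The two arguments are logically equivalent---summing the Vi\`ete relations \emph{is} evaluation at $r=1$---but yours avoids the bookkeeping of elementary symmetric polynomials entirely and makes the role of the constraint \eqref{constraint} transparent. Your additional remarks on the branch choice for the $N$-th root and on the explicit solvability for $\theta$ when $m(\kappa)\neq 0$ are also correct and go slightly beyond what the paper states.
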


\begin{proof}



To compute the common root $\gamma$ in \eqref{root_N}, the parameters $\theta$ of the polynomials $p_N(r;m(\kappa))$ are related to its roots $r_1,r_2, \dots, r_N$ using Vi\`{e}te's formulas \cite{vinberg}. These formulas are obtained from the elementary symmetric polynomials and state that for any polynomial such as $p_N(r;m(\kappa))$ defined by real or complex coefficients $\theta$, then the following equations are satisfied
\begin{subequations}\begin{align*}
r_1 + r_2 +r_3+ \dots + r_N &= \theta_{N-1}m(\kappa),
\\ 
r_1 r_2+ r_1r_3 + \dots r_1r_N +  \dots + r_{N-1}r_N &= -\theta_{N-2}m(\kappa),
\\ 
r_1 r_2r_3+ \dots  + r_1r_2r_N +  \dots + r_{N-2}r_{N-1}r_N  & = \theta_{N-3}m(\kappa),
\\
...........................................................
\\
r_1r_2r_3 \dots r_N = (-1)^{N+1}&\theta_0m(\kappa).
\end{align*}\end{subequations}
If $\theta$ is chosen according to Proposition \ref{prop:roots},  then the polynomial $p_N(r;m(\kappa))$ has a common root $\gamma$, as in $r_1 = r_2 = \dots = \gamma$, and the Vi\`{e}te's formulas collapse to
\begin{subequations}\begin{align*}
N\gamma & = \theta_{N-1}m(\kappa),
\\ 
\begin{pmatrix} N \\ 2 \end{pmatrix}\gamma^2 & = -\theta_{N-2}m(\kappa),
\\ 
\begin{pmatrix} N \\ 3 \end{pmatrix}\gamma^3 &= \theta_{N-3}m(\kappa),
\\
................&........................
\\
\gamma^N &= (-1)^{N+1}\theta_0m(\kappa).
\end{align*}\end{subequations}
The constraint $\sum_{j = 0}^{N-1} \theta^{(j)} = 1$ means that after summing these formulas,  the common root satisfies the polynomial
\begin{align}
\gamma^N+\sum^{N-1}_{j = 1} (-1)^{j+1}\begin{pmatrix} N \\ j \end{pmatrix}\gamma^j = (-1)^{N+1}m(\kappa).
\end{align}
Alternatively, this polynomial can be written 
\begin{align}
(\gamma-1)^N = (-1)^N\kappa,
\end{align}
giving \eqref{root_N}.
\end{proof}

Note the similarity between this equal root condition and the critical damping analysis of the fast gradient method in \cite{candes}. In fact, with this condition, the fast gradient method is recovered with $N = 2$.



\subsection{Robustness}\label{sec:rob1}

Choosing $\theta$ according to Proposition \ref{prop:roots} means that the iterate update \eqref{w_i_mu} associated to the dominant eigenvalue of the Hessian converges as in \eqref{w_rate}. However, this says nothing about the convergence of the other modes in \eqref{modes_i}. The characteristic equation for these modes is
\begin{align}
p_N(r;m(\lambda_i/L)) = r^N - \sum_{j = 0}^{N-1} \theta^{(j)} m(\lambda_i/L)r^{N-1-j}= 0
\end{align}
which differs from the polynomial $p_N(r;m(\kappa))$ in \eqref{char} by perturbations in the polynomial coefficients $m(\cdot)$. Such perturbations can significantly influence the root locations and even cause the polynomial to no longer be Schur \cite{bdo}.  

This results in a robustness issue which is illustrated in Figure \ref{fig:m}, where $\rho(p_N(r;m(\lambda_i/L))) $ is plotted against $m(\lambda_i/L)$ for a problem with $\kappa = 0.01$. At the point $m(\lambda_i/L) = m(\kappa)$, increasing $N$ pushes the root radius down according to \eqref{root_N} but increases it elsewhere. There is even a range of $m(\lambda_i/L)$ for which $\Sigma_5$ has a root radius greater than 1, inducing divergence. These algorithms can then be said to be fragile, calling for the switching controller of Section \ref{sec:res} to compensate for this lack of robustness and to recover performance. 


\begin{figure}
\centering
\includegraphics[width = 1\columnwidth]{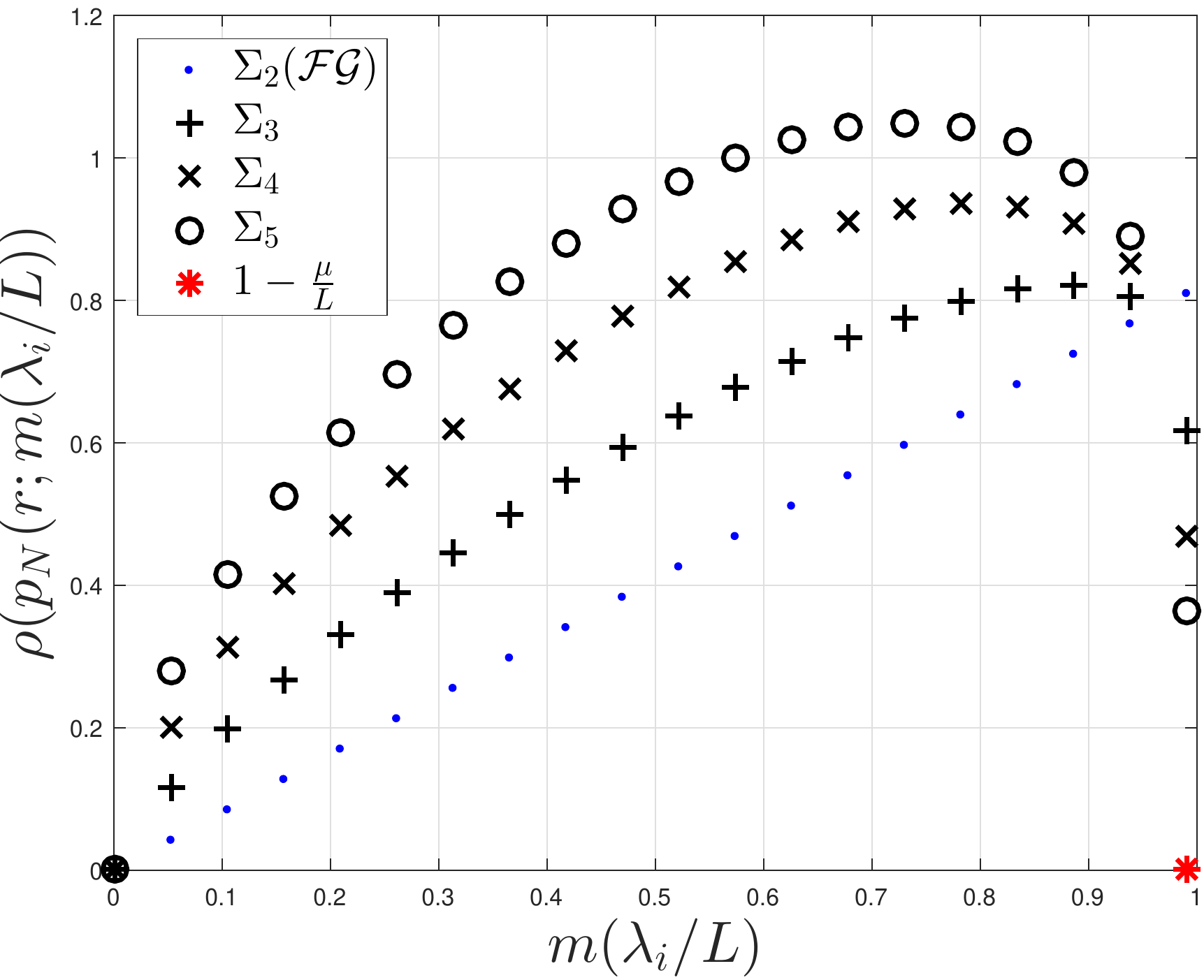}
\caption{Variations of the root radius of $p_N(r;m(\lambda_i/L))$ as a function of $m(\lambda_i/L)$ when $\kappa = 0.01$. The root radius at $m(\lambda_i/L) = m(\kappa)$ dramatically drops according to \eqref{root_N} but increases elsewhere and in fact even exceeds one.  This results in a divergence  which is controlled using a switching controller.}
\label{fig:m}
\end{figure}

\section{Analysis of Strongly Convex Functions}\label{sec:rob}

The algorithm design and robustness analysis of the previous section was applied to quadratic problems as it meant that the algorithm dynamics were linear, making the design problem  tractable. This section discusses the generalisation of this analysis to generic functions $f(x) \in \mathcal{S}_{\mu,L}^n$ which lead to nonlinear algorithm dynamics. 

To this end, the absolute stability approach of \cite{iqcs} is adopted. In this setting, the gradient of the cost function is regarded as a nonlinear function $u(x):\mathbb{R}^n \to \mathbb{R}^n$ that is static
and sector bounded, satisfying the quadratic inequality
\begin{align}
\begin{bmatrix}y_k-y^* \\ u_k-u^*\end{bmatrix}
\begin{bmatrix}-2mLI_n & (L+m)I_n \\(L+m)I_n & -2I_n\end{bmatrix}
\begin{bmatrix}y_k-y^* \\ u_k-u^*\end{bmatrix} \geq 0.
\end{align}
If $\nabla f(x) \in S^n_{\mu,L}$, then $u(x) = \nabla f(x)$ satisfies these criteria \cite{iqcs}. One can apply a loop transformation \cite{khalil} on the nonlinearity so as to normalise the sector from $[\mu,L]$ to $[0,1]$ by writing the gradient as 
\begin{align}
\nabla f(x) = (L-\mu)\left(\frac{\nabla f(x)- \mu x}{L-\mu}+ \frac{\mu x}{L-\mu}\right),
\end{align}
or
\begin{align}
\nabla f(x) = (L-\mu)\nabla{f}_{([0,1])}(x)+ \mu x
\end{align}
with the transformed nonlinearity $u_{[0,1]}(x) = \nabla f_{[0,1]}(x) =\frac{\nabla f(x)- \mu x}{L-\mu}$ now lying within the sector $[0,1]$.

This loop transformation allows the algorithms $\Sigma_N$ to be expressed as
\begin{align}
x_{k+1} = \sum_{j = 0}^{N-1}m(\kappa)\theta^{(j)} x_{k-j} -\frac{L-\mu}{L}\nabla{f}_{[0,1]}(y_k).
\end{align}
whose $\mathcal{Z}$-transform, with $u_{[0,1]}(y_k) = \nabla {f}_{[0,1]}(y_k)$, is
\begin{align}
X(z)&=  -\left(\frac{L-\mu}{L}\right)\left(zI_n-\sum_{j = 0}^{N-1}m(\kappa)\theta^{(j)} z^{-j}I_n\right)^{-1}U_{[0,1]}(z),\\
Y(z)& = \left(\sum_{j = 0}^{N-1}m(\kappa)\theta^{(j)} z^{-j}I_n \right)X(z).
\end{align}
By using dimension reduction \cite{iqcs}, the transfer function of the system of interest is then reduced to
\begin{align}
&\frac{Y(z)}{U_{[0,1]}(z)}  = G_N(z) = -\left(\frac{L-\mu}{L}\right)\frac{\sum^{N-1}_{j = 0} m(\kappa)\theta^{(j)} z^{-j}}{ z +  \sum^{N-1}_{j = 0} m(\kappa)\theta^{(j)} z^{-j}}. \nonumber
\end{align}
This transfer function allows the algorithm to be expressed as the feedback interconnection of the linear system $G_N(z)$ with the nonlinearity $u_{[0,1]}(y_k)=  \nabla f_{[0,1]}(y_k)$ as illustrated in Figure \ref{fig:Lurie1}. 

The stability of such feedback systems can be verified using standard techniques such as constructing Lyapunov functions (e.g. those proposed by Tsypkin \cite{tsypkin} and Szego \cite{szego}) and searching for Zames-Falb multipliers. These results give a framework by which one can compute many system properties, such as reachable sets, stability margins, exponential convergence rates and local stability regions \cite{boyd} for instance. For the sake of brevity, the details of these methods are not included here.  It is stressed that this analysis is applicable for all functions satisfying the sector conditions, generalising the analysis to the whole class of functions $f(x) \in \mathcal{S}_{\mu,L}^n$.


The design of Proposition \ref{prop:roots} can then be considered from the open loop perspective, as the parameters $\theta$ were set such that the poles of $G_N(z)$ were fixed to $\gamma$. Even though these poles are Schur, and hence $G_N(z)$ is stable, this does not guarantee the stability of the overall feedback loop. In fact, it is known that introducing feedback can in fact lead to instability, as reflected in the analysis of Section \ref{sec:rob1}.  

A more nuanced design criteria would then be posed in the $z$-domain and take into consideration the passivity of the feedback structure more explicitly, in a manner similar to \cite{vanscoy}. This approach was not adopted here as analytic results were desired. 

 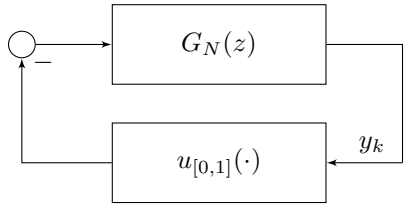
\begin{figure}
\tikzstyle{block} = [draw, fill=white!20, rectangle, 
    minimum height=3em, minimum width=8em]
\tikzstyle{sum} = [coordinate]
\tikzstyle{input} = [coordinate]
\tikzstyle{output} = [coordinate]
\tikzstyle{pinstyle} = [pin edge={to-,thin,black}]
\tikzstyle{sum2} = [draw, fill=white!8, circle, 
    minimum height=1em, minimum width=1em]
\center
\begin{tikzpicture}[auto, node distance=2.0 cm,>=latex']


    \node [block] (G) {$G_N(z)$};
\node [block, below =0.5cm of G] (phi) {$u_{[0,1]}(\cdot)$};
\node [sum, right =1cm of phi] (pinR) {};
\node [sum2, left =1cm of G] (pinL) {};
    
    \draw [-] (G) -| node {} (pinR);
  \draw [->] (phi) -|(pinL) ;
 \draw [->] (pinR) --(phi.east)node[pos=0.4,above] {$y_k$};
  \draw [->] (pinL) --node[pos=0.1,below] {$-$}(G);
\end{tikzpicture}
\caption{First order optimisation methods represented as the feedback interconnection of a linear system with a sector bounded nonlinearity corresponding to the gradient of the function $u_{[0,1]}(x) = \nabla f_{[0,1]}(x)$. }
\label{fig:Lurie1}
\end{figure}

\section{Control by Restarting}\label{sec:res}
To stabilise and recover the performance of $\Sigma_N$ , the use of adaptive restarting \cite{candes} as a form of switching control is proposed. The typical notion of restarting an algorithm means to perform a gradient descent step at a given iteration to promote monotonic convergence and was initially applied after a pre-defined iteration number \cite{nesterovrestart}. In \cite{candes}, an improved adaptive restarting approach was proposed where the gradient descent step was only applied if the projected iterates caused a certain condition to trigger, with a typical trigger being that the projected iterate would increase the value of the function as in $f(x_{k+1}) \geq f(x_k)$. This is the trigger condition adopted here although others exist, such as the gradient  condition of \cite{candes}. When such a condition is triggered then the algorithm switches to $\mathcal{GD}$, guaranteeing a decrease in the function value, before switching back to the non-monotonic algorithm (such as $\mathcal{FG}$) to take advantage of its faster convergence rate. Here, this notion is generalised to switch between the many algorithms in the set $\Omega  = \{\Sigma_j \, :\, j = 1, \dots, N\}$, not just between $\mathcal{GD}$ and $\mathcal{FG}$.

Denote $\mathcal{T}_N(x_{k}, \, \dots, \, x_{k+1-N})$ as the iterate update operator for $\Sigma_N$, such that \eqref{dyns} can be written more compactly as
\begin{align}
x_{k+1} = \mathcal{T}_N(x_{k}, \, \dots, \, x_{k+1-N}).
\end{align}
With this notation the implementation of the fast gradient method with adaptive restarting can be expressed as Algorithm 1 (where a function based restart is used).
\begin{algorithm}\label{alg:fg_res}
\caption{$\mathcal{FG}$ with Adaptive Restarting}
\begin{algorithmic}
\Require{$x_0 \in \mathbb{R}^n$, $x_{-1} = x_0$, $L$ and $\beta$.}
\For{$k = 0,1, 2, \dots$} 
 \State $\hat{x}_{k+1} = \mathcal{T}_2(x_k,x_{k-1})$ 
\If {$f(\hat{x}_{k+1})-f(x_k) >0$}
    \State $x_{k+1} = \mathcal{T}_1(x_k)$ 
\Else
\State $x_{k+1} = \hat{x}_{k+1}$
\EndIf
\EndFor
\end{algorithmic}
\end{algorithm}

This adaptive approach is generalised in Algorithm 2 to allow the algorithms to switch between the whole set $\Omega$.

\begin{algorithm}
\caption{$\Sigma_N$ with Adaptive Restarting ($\Sigma^{Re}_N$)}
\begin{algorithmic}
\Require $N \in \mathbb{N}$, $x_{1-N:0} \in \mathbb{R}^{N n }$, $L$ and $\theta$.
\For {$k = 0,1, 2, \dots$} 
 \State $\hat{x}_{k+1} = \mathcal{T}_N(x_k,\,x_{k-1}, \, \dots\, x_{k+1-N})$ 
\If {$f(\hat{x}_{k+1})-f(x_k) >0$}
    \State $\hat{x}_{k+1} = \mathcal{T}_{N-1}(x_k, \,x_{k-1},\, \dots, \, x_{k+2-N})$ 
    \If {$f(\hat{x}_{k+1})-f(x_k) >0$}
        \State $\hat{x}_{k+1} = \mathcal{T}_{N-2}(x_k, \,x_{k-1}, \, \dots , \, \, x_{k+3-N})$ \\
        \State $\dots$ \\
            \If {$f(\hat{x}_{k+1})-f(x_k) >0$}
        \State ${x}_{k+1} = \mathcal{T}_{1}(x_k)$
        \EndIf \\
        \quad \quad \quad \quad \quad $\dots$ \\
    \EndIf
\Else
\State $x_{k+1} = \hat{x}_{k+1}$
\EndIf
\EndFor
\end{algorithmic}
\end{algorithm}

A second scheme that also guarantees monotonic convergence is to update each algorithm in the set $\Omega$ simultaneously and then to select the iterate that minimises the cost function the most. This is illustrated in the multi-legged algorithm $\Sigma_N^{ml}$ of Algorithm 3, so named because the algorithm tentatively steps from the point $x_k$ in various directions. Monotonic convergence is guaranteed with this scheme for $f(x) \in \mathcal{S}_{\mu,L}^n$ by including the $\mathcal{GD} = \Sigma_1$ algorithm in $\Omega$. 

\begin{algorithm}
\caption{Multi-Legged Algorithm ($\Sigma_N^{ml}$)}
\begin{algorithmic}
\Require $N \in \mathbb{N}$, $x_{1-N:0} \in \mathbb{R}^{N n }$, $L$ and $\theta $.
\For {$k = 0,1, 2, \dots$} 
 \State $\hat{x}_{k+1,1} = \mathcal{T}_1(x_k)$ 
 \State $\hat{x}_{k+1,2} = \mathcal{T}_2(x_k,x_{k-1})$ 
 \State $\dots$
 \State $\hat{x}_{k+1,N} = \mathcal{T}_N(x_k,x_{k-1}, \dots, x_{k+1-N})$ 
 \State Define  $\mathbb{X}_{k+1} = \{\hat{x}_{k+1,1}, \, \dots, \, \hat{x}_{k+1,N}\}$  
	 \State $x_{k+1} = \min_{x \in \mathbb{X}_{k+1}}~f(x)$.
\EndFor
\end{algorithmic}
\end{algorithm}

Controlling the algorithms by adaptive restarting in this way allows algorithms for which the operator $\mathcal{T}_{N}$ is not be globally contracting to be considered. The use of such operators was found to speed up convergence (as discussed in the examples of Ssection \ref{sec:ex_quad}) and suggests connections to the fundamental limitations between controller performance and robustness.

The need to evaluate the function at each iteration increases the computational cost of running these algorithms, and this could be reduced by using the gradient based trigger condition. Memory requirements are also increased with these methods. But, as demonstrated in the following examples section, adaptive restarting brings both added performance and robustness over the fast gradient method, with robustness being highlighted as a necessary feature for the use of these algorithms in practise \cite{robust_grad}. It is also noted that these approaches are highly parrelisable which should recover some computational speed in an efficient implementation.


 \begin{figure*}
\centering
\begin{subfigure}[b]{0.45\textwidth}\centering
\includegraphics[width=0.95\textwidth]{{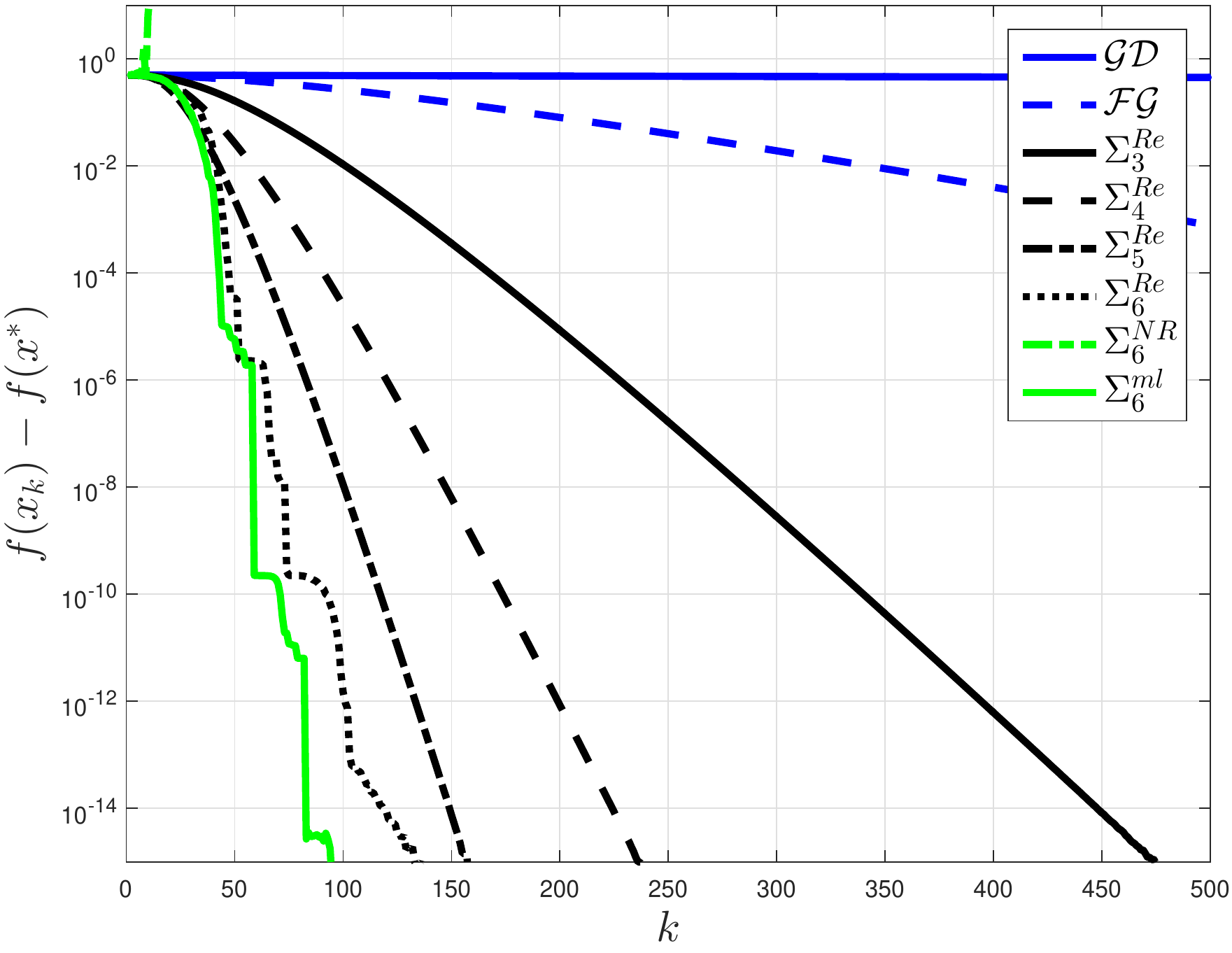}}
\caption{Function iterates of $f_{ex:1}(x)$.}
\label{fig:1f}
\end{subfigure}
\begin{subfigure}[b]{0.45\textwidth}\centering
\includegraphics[width=0.95\textwidth]{{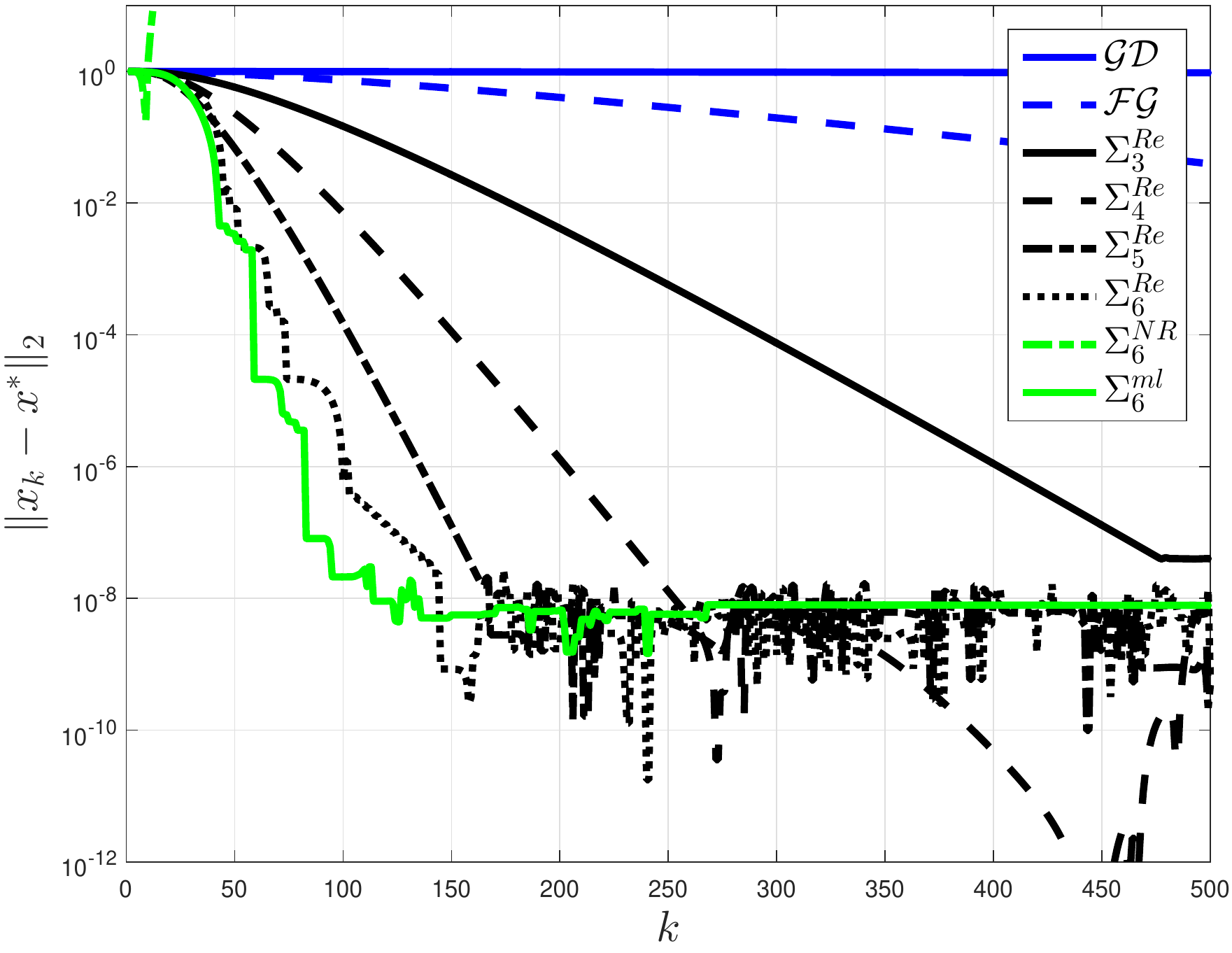}}
\caption{Error trace of $f_{ex:1}(x)$.}
\label{fig:1e}
\end{subfigure}
\label{fig:ex1}
\\
\begin{subfigure}[b]{0.45\textwidth}\centering
\includegraphics[width=0.95\textwidth]{{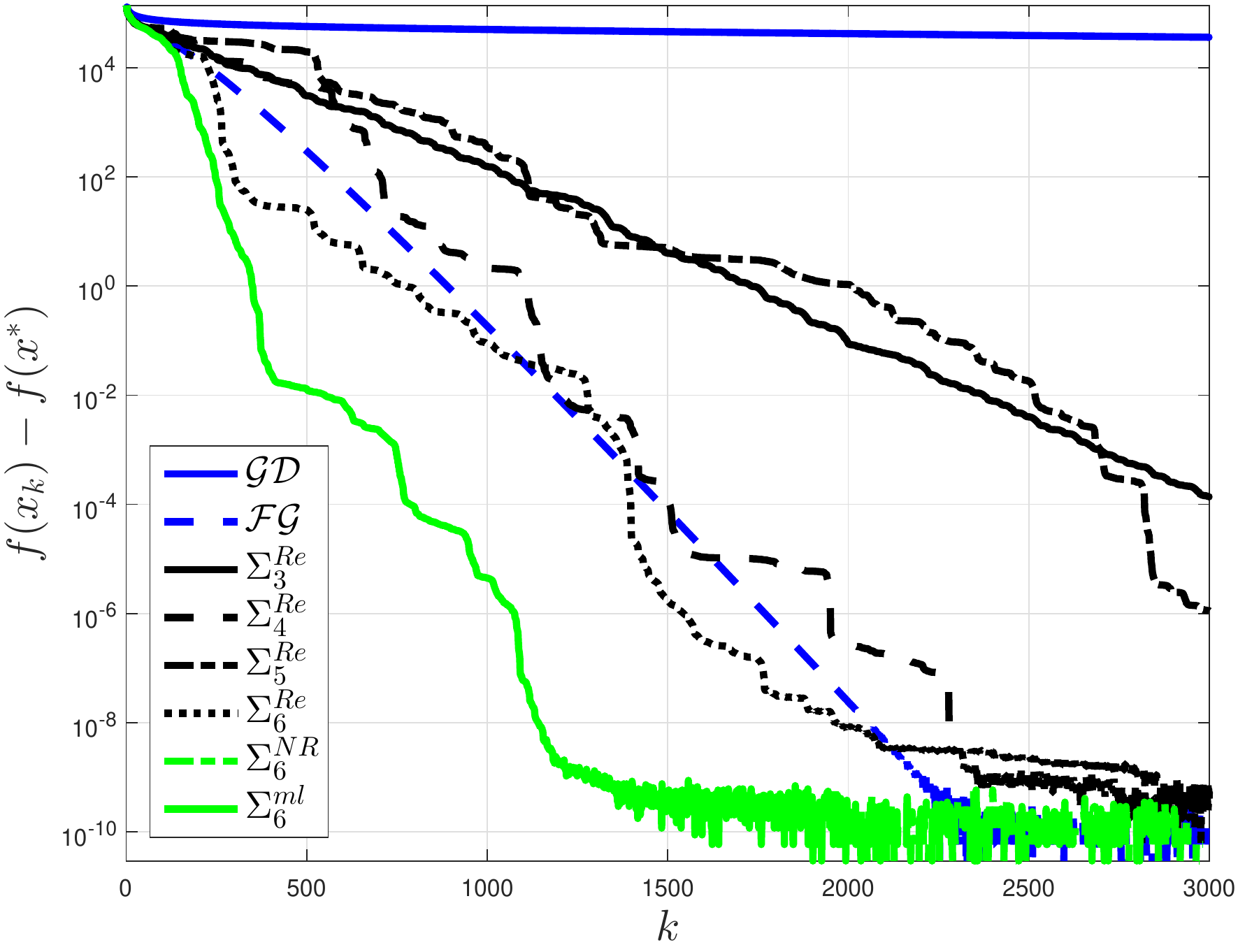}}
\caption{Function trace of $f_{ex:2}(x)$.}
\label{fig:2f}
\end{subfigure}
\begin{subfigure}[b]{0.45\textwidth}\centering
\includegraphics[width=0.95\textwidth]{{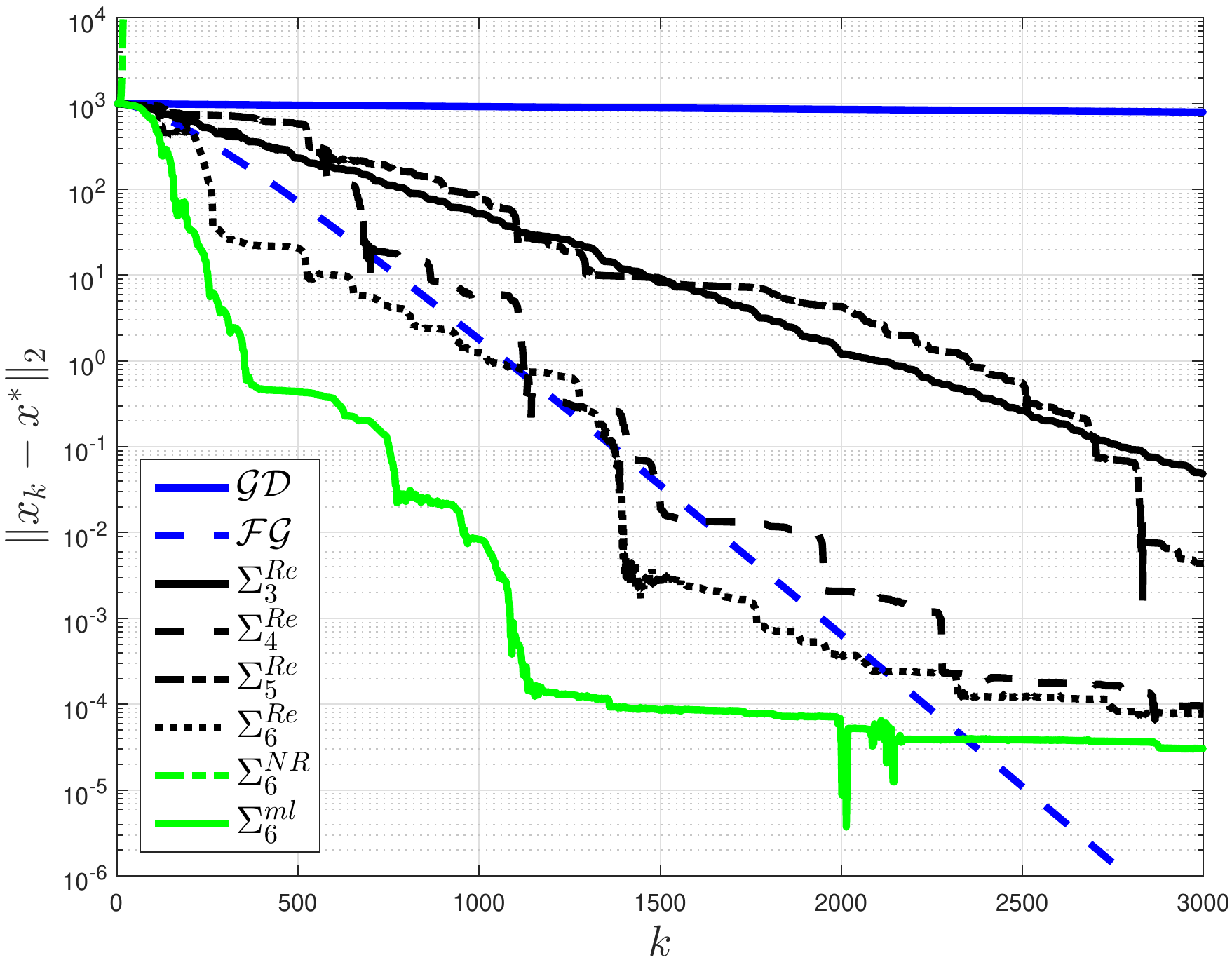}}
\caption{Error trace of $f_{ex:2}(x)$.}
\label{fig:2e}
\end{subfigure}
\caption{Minimisation of the functions $f_{ex:1}(x)$ and $f_{ex:2}(x)$ using gradient descent $\mathcal{GD}$, the fast gradient method $\mathcal{FG}$, the proposed algorithms with adaptive restarting $\Sigma^{Re}_N$ (Algorithm 2) for various $N$ and the multi-legged algorithm $\Sigma_N^{ml}$ for $N = 6$ (Algorithm 3). The multi-legged algorithm $\Sigma_6^{ml}$ converged quickest until the function values became small.}
\label{fig:ex2}
\end{figure*}

\section{Examples}\label{sec:ex_quad}
This section evaluates the performance of the proposed first-order algorithms via several numerical examples.

\subsection{Quadratic problems}
\subsubsection{Hessian eigenvalues clustered at $L$}
We begin with the follow quadratic functions whose eigenvalues are clustered at the Lipschitz constant $L$
\begin{align}\label{f_ex1}
f_{ex:1}(x) = (x^{(1)})^2 +\bm{1} x+ \sum_{j = 0}^{n-2} (L-j) (x^{(j+2)})^2.
\end{align}
The minimisation of this function with $\mu = 1$, $L = 10^4$ and $n = 10^3$ from an initial condition of $x_0 = \bm{0}$ using $\Sigma_{1}^{Re}, \, \Sigma_{2}^{Re}, \dots, \, \Sigma_{6}^{Re}$ with adaptive restart and the multi-legged algorithm $\Sigma_N^{ml}$ is shown in Figures \ref{fig:1e} and \ref{fig:1f}. As a reminder, $\Sigma_1^{Re}$ corresponds to gradient descent, $\Sigma_6^{ml}$ is the multi-legged algorithm for $N = 6$ whilst $\Sigma_2^{Re}$ and $\Sigma_{3:N}^{Re}$ are respectively the fast gradient method and the proposed algorithms with adaptive restart. Also plotted on this figure is the trace of the algorithm $\Sigma_6$ without adaptive restarting, denoted $\Sigma_6^{NR}$.   

Increasing the memory $N$ led to faster convergence speed, with $\Sigma_6^{Re}$ being an order of magnitude faster than the fast gradient method. This was until the function values became small from which the function based adaptive restarting scheme struggled. Without restarting, the iterates of $\Sigma_6$ were actually divergent for this problem, highlighting the need for stabilisation by restarting. This simple example shows how the fast gradient method may  not be ``optimal'' for many strongly convex problems and initial conditions. 

\subsubsection{A more even distribution of Hessian eigenvalues}

In contrast, the following quadratic function \eqref{f_ex1}
\begin{align}
f_{ex:2}(x) = \frac{1}{2}x^T\begin{bmatrix}1 & 1 & 1  & \dots  & 1 \\
1 & 2 & 1 & \dots & 1 \\
1 & 1 & 3 & \dots & \vdots\\
\vdots  &  \vdots & \vdots &\ddots & 1\\
1 & 1 & \dots & 1 &  n \end{bmatrix} x
+ \begin{bmatrix}1 \\ 2 \\3 \\ \vdots \\  n \end{bmatrix}^Tx.
\end{align}
has a more even distribution of eigenvalues. The minimisation of this function with $n = 10^3$ is shown in Figures \ref{fig:2e} and \ref{fig:2f}, with the fastest convergent rate once again being observed with $\Sigma_6^{ml}$. The condition number of this function is $ 1.37\times 10^4$, but because the eigenvalues are spread more evenly in the range $[\mu,L]$, the performance improvement of the algorithms $\Sigma_N^{Re}$ were not as substantial as for $f_{ex:1}(x)$. However, $\Sigma_6^{ml}$ still obtains a significantly faster convergence rate than $\mathcal{FG}$.

\begin{figure}
\includegraphics[width=0.5\textwidth]{{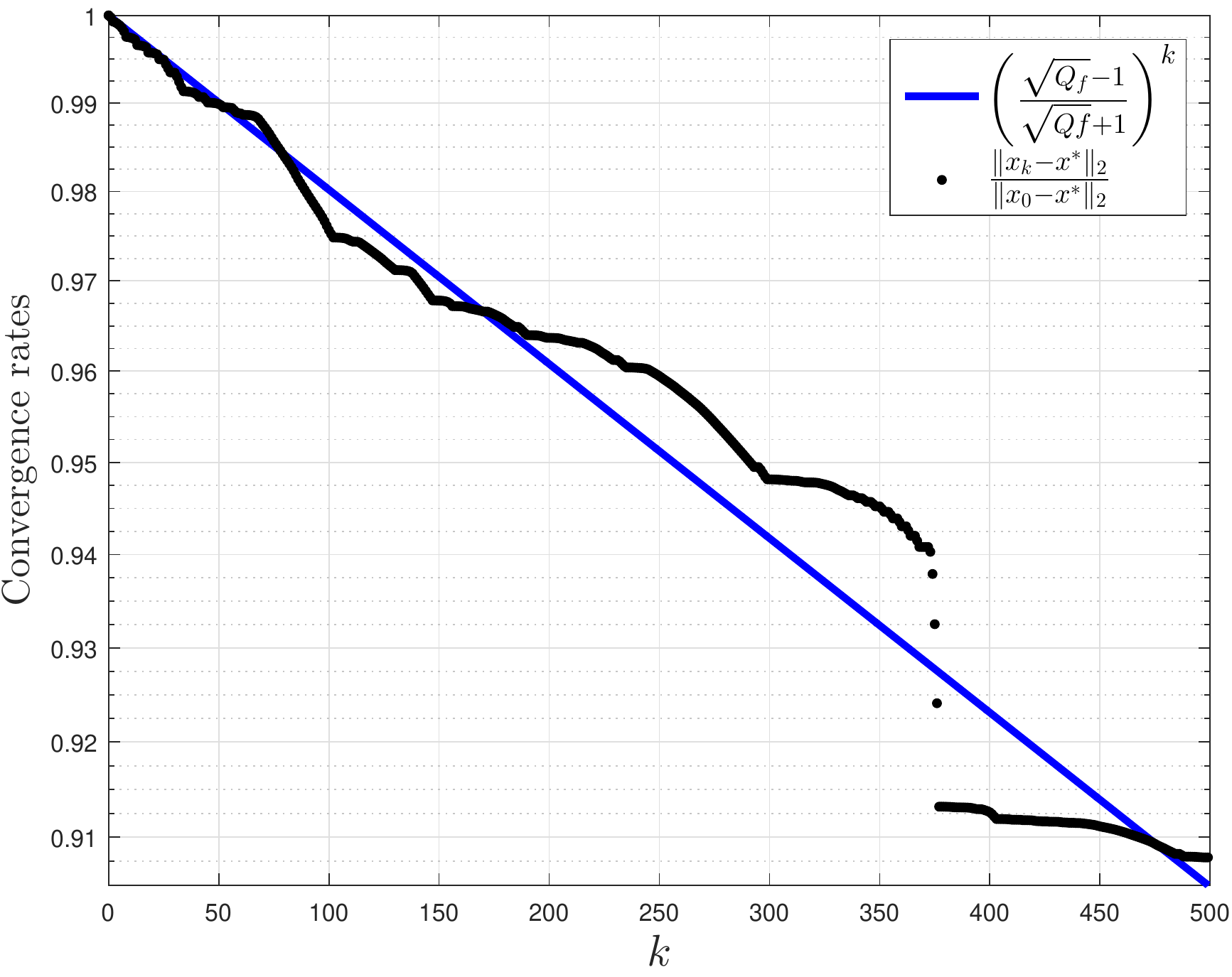}}
\caption{Convergence of the truncated version of the Nesterov's counter-example $f_{Nest}^n$ using $\Sigma_6^{ml}$. The truncation allows the generated iterates to exceed the bound given in \eqref{opt_bound}. }
\label{fig:nest_ce}
\end{figure}
\subsection{Nesterov's counter-example}\label{sec:opt}
Any proposed first-order method that compares itself against the fast gradient method has to take into consideration that algorithm's optimality. This label is discussed here.

The fast gradient method is referred to as an ``optimal'' algorithm \cite[Chapter 2]{nest}, with the justification being that there exists a function
\begin{align}\label{worst_case}
&f_{Nest}(x) = \frac{1}{2}\mu\|x\|^2 \\
& + \frac{\mu(Q_f-1)}{4}\left\lbrace\frac{1}{2}\left[(x^{(1)})^2 + \sum_{i = 1}^{\infty} (x^{(i)}-x^{(i+1)})^2\right]-x^{(1)}\right\rbrace \nonumber.
\end{align}
 for which from the specific initial condition $x_0 = 0$, no algorithm can perform better. This ``worst-case'' function is defined for signals in a Hilbert space $x \in {l}_2$ and is strongly convex since 
\begin{align}
\mu I \leq f_{Nest}''(x) \leq \mu Q_fI.
\end{align}

From the first order optimality condition
\begin{align}
f'_{Nest}(x^*) = 0,
\end{align}
it follows that any sequence that minimises this function must satisfy
\begin{subequations}\label{sys_opt2}\begin{align}
\frac{Q_f+1}{Q_f-1}(x^{*})^{(1)}-(x^{*})^{(2)} &= 1 \\
(x^{*})^{(k+1)}-2\frac{Q_f+1}{Q_f-1}(x^{*})^{(k)}+x^{*(k-1)} &= 0, \quad k = 2, 3, \dots
\end{align}\end{subequations}
This sequence can be regarded as a linear system with characteristic equation
\begin{align}
q^2-2\frac{Q_f+1}{Q_f-1}q +1 = 0.
\end{align}
The smallest root of this equation is $q = \frac{\sqrt{Q_f}-1}{\sqrt{Q_f}+1}$ \cite[Chapter 2]{nest} and  so the optimal sequence then satisfies
\begin{align}\label{eqn:nest}
(x^*)^{(k)} = q^k, \quad k = 1,2, \dots,
\end{align}
hence
\begin{align}\label{opt_bound}
\|x_k-x^*\|_2^2 \geq \left(\frac{\sqrt{Q_f}-1}{\sqrt{Q_f}+1}\right)^{2k}\|x_0-x^*\|_2^2.
\end{align}
This lower bound for the error $\|x_k-x^*\|$  is the same as the upper bound from the fast gradient method given in \eqref{fg_rate}. Hence, no algorithm can perform better for this  particular function and initial condition, justifying its title of being an optimal method.

To compare the proposed algorithms against such a benchmark, the minimisation of a truncated version of this worst-case function
\begin{align}
&f^n_{Nest}(x) = \frac{1}{2}\mu\|x\|^2 \\
& + \frac{\mu(Q_f-1)}{4}\left\lbrace\frac{1}{2}\left[(x^{(1)})^2 + \sum_{i = 1}^{n} (x^{(i)}-x^{(i+1)})^2\right]-x^{(1)}\right\rbrace \nonumber
\end{align}
was considered with  $n = 10^3$ and $Q_f = 10^6$ starting from the same initial condition $x_0 = 0$. The results of this minimisation are shown in Figure \ref{fig:nest_ce} where the rate $\|x_k-x^*\|^2_2/\|x_0-x^*\|^2_2$ generated by the algorithm $\Sigma_{ml}^6$ and the bound from \eqref{opt_bound} are plotted. Notably, for the truncated function, $\Sigma_{6}^{ml}$ could actually violate the bound. However, \textbf{no claim is made here against the optimality of $\mathcal{FG}$ for $f_{wc}(x)$}. The violation was only achieved by truncating the function and sufficiently increasing the condition number $Q_f$ such that, by a uniqueness of solution argument, the minimiser \eqref{sys_opt2} is corrupted, making the bound in \eqref{opt_bound} rather irrelevant for $f^n_{wc}(x)$. 

\subsection{Non-convex functions}\label{sec:non_convex}
Due to applications in machine learning, recent results on accelerated gradient methods have focussed on functions that are only weakly convex or even non-convex \cite{accel_saddle, perturbed}. For such problems, gradient descent typically gets stuck in local minima/saddle points from which random perturbations in the gradient may have to be added to escape \cite{perturbed}. It has also been noted that introducing momentum (via the fast gradient) can be beneficial for such problems, as the iterates can then overshoot the local minima \cite{accel_saddle}.  

This idea of adding momentum for non-convex problems is extended here to the accelerated $\Sigma^{ml}_N$ method. No rigorous convergence analysis is provided as that goes beyond the scope of the paper. The justification of using $\Sigma_N^{ml}$ for such problems is that the generated points $y_k$ from which the gradient step is taken from may not lie within the convex hull of the iterate history. This allows the algorithm's iterates to leave the local minima. In essence, the randomness of the stochastic gradient method is replaced by the jumps and local instabilities of the deterministic $\Sigma^{ml}_N$ algorithm. 

For these problems, the choice of $\mu$ and $L$ was found to significantly influence the convergence. Making $\mu/L$ too small meant that $\Sigma^{ml}_N$ converged to a local minimum via gradient descent while making $\mu/L$ too large meant that even the gradient descent algorithm became divergent. Best performance was achieved when these parameters were set such that the resulting algorithms were on the boundary between contracting and diverging.  

Two benchmark non-convex functions are examined; Rosenbrock's banana function and the Rastringin function. Again, it is highlighted that a deterministic gradient-based method was used for these minimisations. 



\subsubsection{Rosenbrock banana function}
The Rosenbrock banana function \cite{rosenbrock} with $n = 2$
\begin{align}
f_{Ros}(x) = (1-x^{(1)})^2 + 100(x^{(2)}-x^{(1)})^2
\end{align}
has a global minimiser at $(1,1)$  at the bottom of a valley. For gradient methods, converging to this valley is trivial but then travelling down to the global minimiser is exhaustive. Figure \ref{fig:Ros} shows the minimisation of this function using the algorithms $\mathcal{GD}$, $\mathcal{FG}$ and $\Sigma_N^{ml}$ with $N = 3, \,\dots, \, 9$, $\mu = 10^{-5}$ and $L = 0.9 \times 10^3$ from $x_0 =(-1,1)$. The figure shows the convergence of gradient descent being slow both to and in the valley, and furthermore, for this choice of $\mu$ and $L$, the fast gradient method was divergent. This illustrates the well-known lack of robustness of this algorithm \cite{robust_grad}. The algorithms $\Sigma_N^{ml}$ performed well for this function, with $\Sigma_9^{ml}$ finding a function value of $7.58 \times 10^{-12}$ in 43 iterations (with this number of iterations not accounting for the preliminary steps taken by the algorithm in the restarting). This is lower than the function value of $1.35\times 10^{-10}$ in 185 iterations of the Nelder-Mead method \cite{nelder}. 


\begin{figure}
\graphicspath{ {./Figures/} }
\includegraphics[width=0.5\textwidth]{{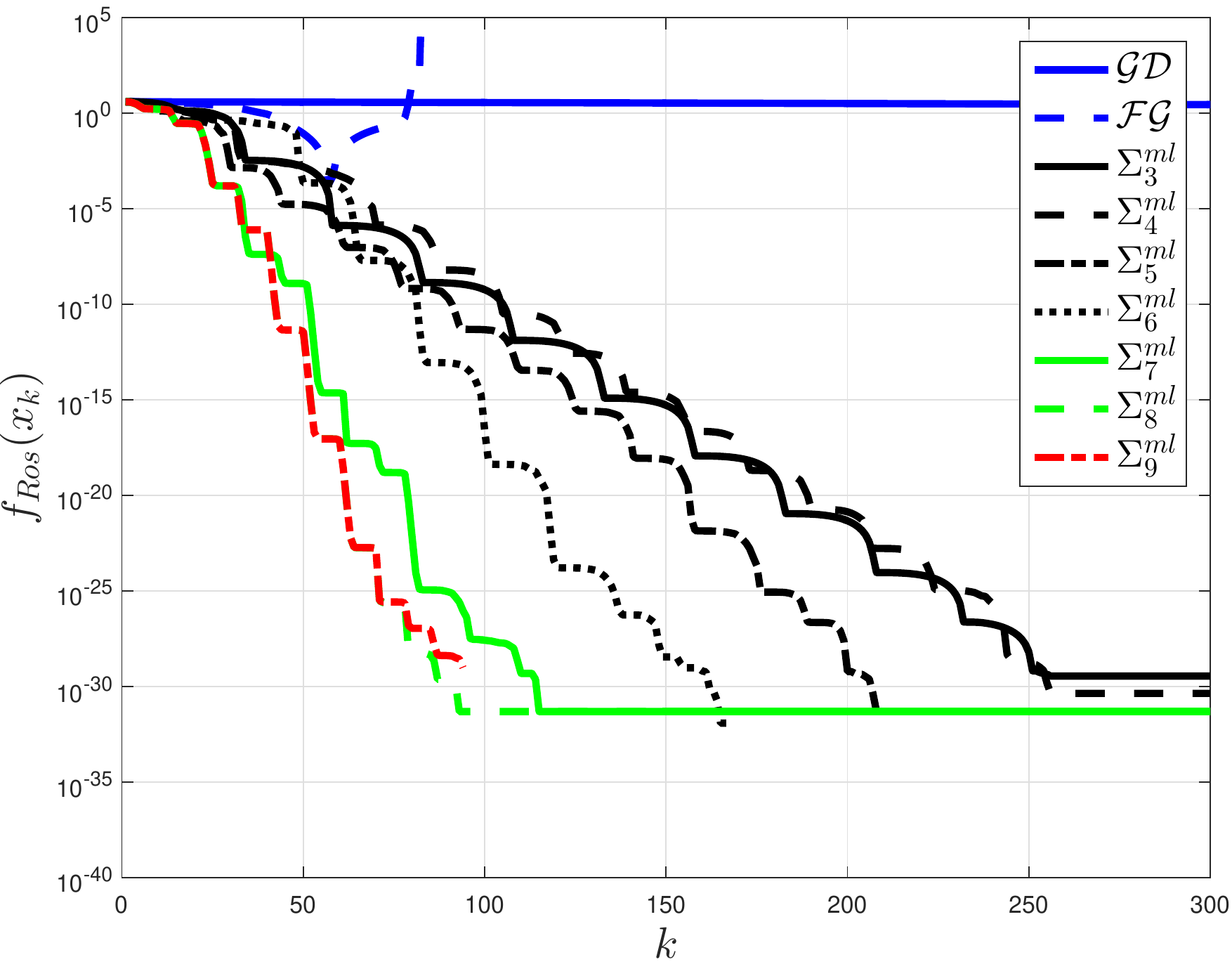}}
\caption{Minimisation of the Rosenbrock banana function using gradient descent, the fast gradient method and the algorithms $\Sigma_N^{ml}$.  }
\label{fig:Ros}
\end{figure}

\subsubsection{Rastrigin function}

A more challenging benchmark non-convex function is the Rastrigin function \cite{rastrigin}
\begin{align}
f_{Rast}(x) = 10n+\sum_{j = 1}^nx_j^2-10\cos(2\pi x_j)
\end{align}
which has a global minima at the origin  and many local minima. The minimisation of this function with $\mu = 1$, $L = 140$ and $n = 2$ is considered here. These parameters were chosen from considering the balance between algorithm convergence and ability to leave local minima, and, strikingly, this choice of $L$ is greater than the curvature at the minimiser. Figure \ref{fig:ras_x0} shows the trace of the iterates for this minimisation using $\Sigma_N^{ml}$ from the initial conditions $x_0 = (5,5)$ and $x_0 = (-5,-3)$. From $x_0 = (5,5)$, a function value of $f_{Rast} =  10^{-6}$ was obtained after 463 iterations. However, the algorithm performed less well from $x_0 = (-5,-3)$, with the obtained functions values fluctuating around $f_{Rast} \approx 5$ after an initial decrease from $f_{Rast} =34.4$. The local minima of the function can be clearly seen in this figure, with the iterates leaving the local dimples and fluctuating near the minimiser. 

Figure \ref{fig:ras2} further evaluates the influence of the initial conditions for this minimisation of this function. The figure plots the log of the minimum value of the cost function obtained by the $\Sigma_{6}^{ml}$ algorithm in 1000 iterations. Typically, for initial conditions on the 8 pointed star apparent in the figure,  function values between $10^{-6}$ and $10^{-8}$ were found. However, everywhere else, function values only between $ 10^{-3}$ and $10^{-1}$ were obtained. In contrast, for this function, gradient descent got trapped in the local minima and the fast gradient method was unstable for this choice of $\mu$ and $L$ which were chosen to escape the local minima. 



\begin{figure}
\centering
\begin{subfigure}[b]{0.5\textwidth}\centering
\includegraphics[width=0.95\textwidth]{{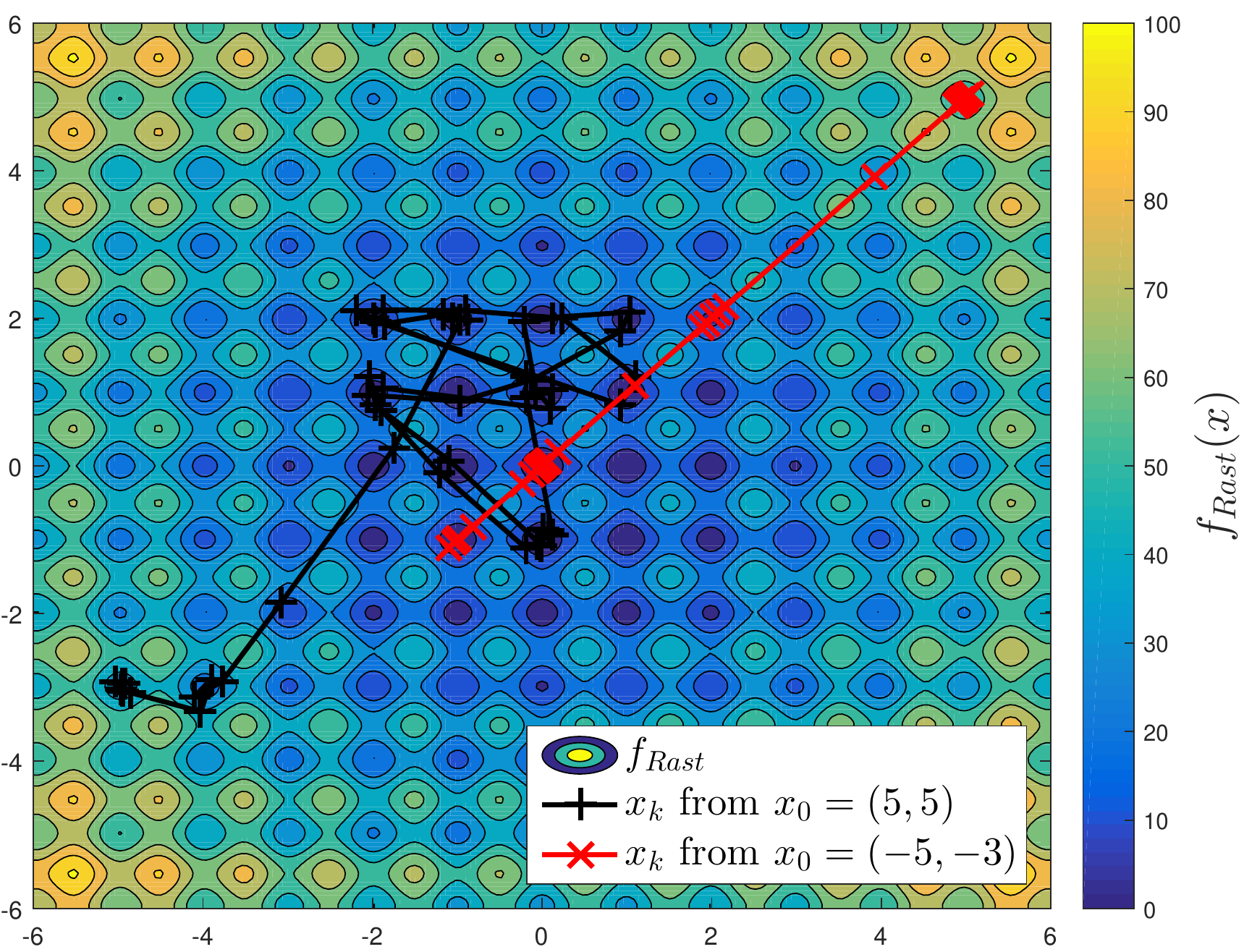}}
\caption{The Rastrigin function $f_{Rast}$ with iterates generated by the algorithm $\Sigma_6^{ml}$ starting from $x_0 = (5,5)$ and $x_0 = (-5,-3)$. The iterates are able to leave the local minima before fluctuating around the global minimiser at the origin.}
\label{fig:ras_x0}
\end{subfigure}
\\
\begin{subfigure}[b]{0.5\textwidth}\centering
\includegraphics[width=0.95\textwidth]{{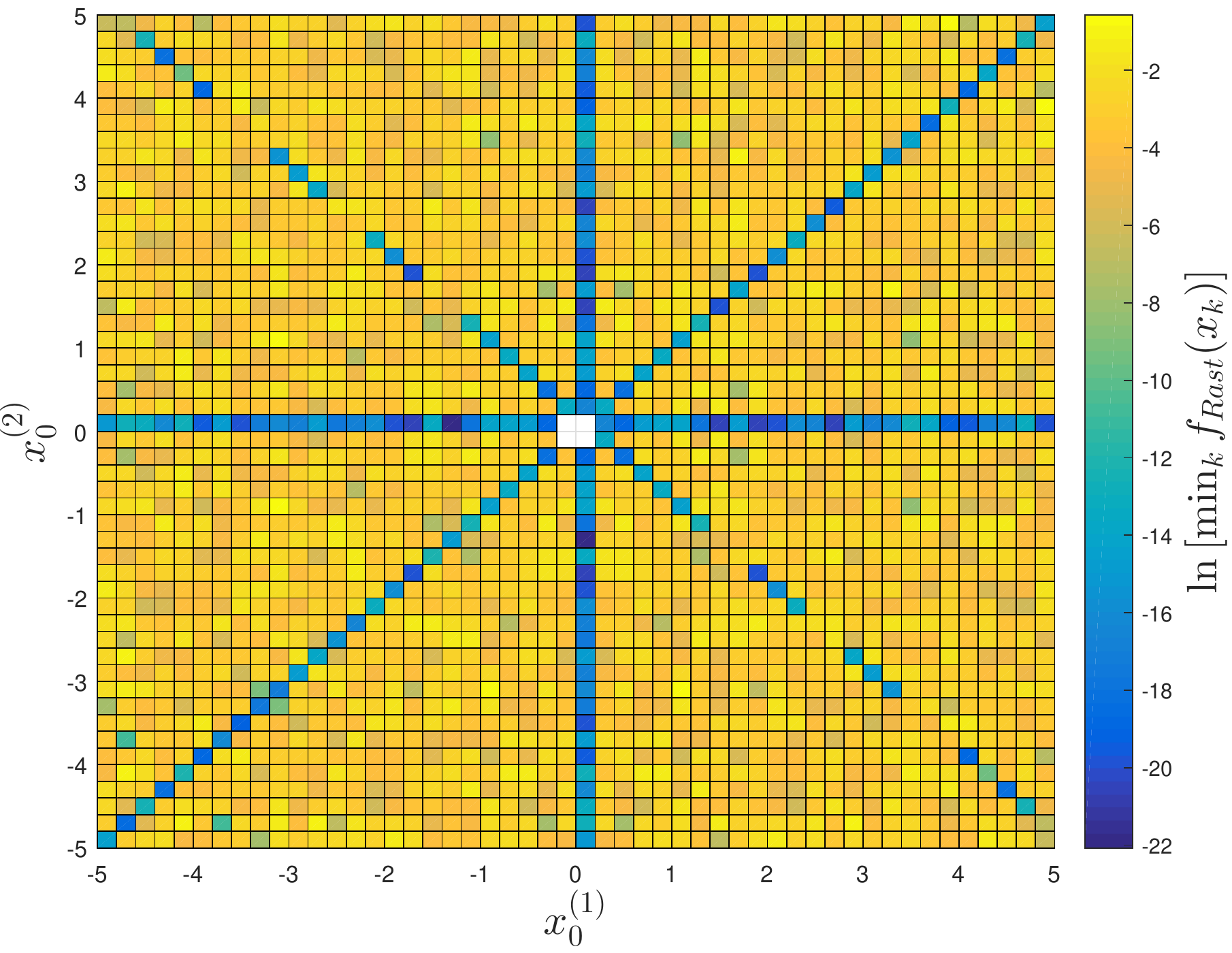}}
\caption{Logarithm of the minimum value of the Rastrigin function $f_{Rast}$ found by $\Sigma_6^{ml}$ from various initial conditions $(x^{1}_0,x^{(2)}_0)$. Initial conditions along the star performed best.}
\label{fig:ras2}
\end{subfigure}
\caption{Figures for the minimisation of the Rastrigin function $f_{Rast}$.}
\label{fig:ras}
\end{figure}

\section{Conclusions}

A set of first-order algorithms  was proposed for minimising strongly convex functions. The proposed algorithms have a similar structure to the fast gradient method but use an increased number of historical iterates, giving them memory. The algorithms are parameterised by considering quadratic functions such that the error associated to the dominant eigenvalue of the Hessian converges at the rate $1-(\mu/L)^\frac{1}{N}$ for generic $N \in \mathcal{N}$. By parametrising against this mode, the algorithms lose robustness, which is recovered by the use of a switching controller. This switching controller uses adaptive restarting such that at each iteration, the algorithms switch between a set of potential iterates to recover monotonic convergence and the acceleration. Numerical examples showcase the benefits of the proposed approach, both for strongly convex and non-convex functions. For example, the algorithms were shown to find values between $10^{-1}$ and $10^{-3}$ of the Rastrigin function from generic initial conditions and values between $10^{-6}$ and $10^{-8}$ with initial conditions along several lines intersecting the global minimiser. Also, when applied to the Rosenbrock banana function, the proposed algorithms could find a function  value of $7.58 \times 10^{-12}$ in 43 iterations from an initial condition of $x_0 = (-1,1)$. It is hoped that the proposed methods will encourage further interest in accelerating first order methods still further using the algorithm memory and to robustify them by closing feedback loops.

\bibliographystyle{IEEEtrans}
\bibliography{bibliog}

\end{document}

\end{document}